\theoremstyle{plain}
  \newtheorem{thm}{Theorem}[section]
  \newtheorem{lem}[thm]{Lemma}
  \newtheorem{prop}[thm]{Proposition}
\theoremstyle{definition}
  \newtheorem{defn}[thm]{Definition}
  \newtheorem{exmp}[thm]{Example}
\begin{document}
\newcommand\thda{\mathrel{\rotatebox[origin=c]{-90}{$\twoheadrightarrow$}}}
\newcommand\thua{\mathrel{\rotatebox[origin=c]{90}{$\twoheadrightarrow$}}}

\newcommand{\oto}{{\lra\hspace*{-3.1ex}{\circ}\hspace*{1.9ex}}}
\newcommand{\lam}{\lambda}
\newcommand{\da}{\downarrow}
\newcommand{\Da}{\Downarrow\!}
\newcommand{\D}{\Delta}
\newcommand{\ua}{\uparrow}
\newcommand{\ra}{\rightarrow}
\newcommand{\la}{\leftarrow}
\newcommand{\Lra}{\Longrightarrow}
\newcommand{\Lla}{\Longleftarrow}
\newcommand{\rat}{\!\rightarrowtail\!}
\newcommand{\up}{\upsilon}
\newcommand{\Up}{\Upsilon}
\newcommand{\ep}{\epsilon}
\newcommand{\ga}{\gamma}
\newcommand{\Ga}{\Gamma}
\newcommand{\Lam}{\Lambda}
\newcommand{\CF}{{\cal F}}
\newcommand{\CG}{{\cal G}}
\newcommand{\CH}{{\cal H}}
\newcommand{\CI}{{\cal I}}
\newcommand{\CB}{{\cal B}}
\newcommand{\CT}{{\cal T}}
\newcommand{\CS}{{\cal S}}
\newcommand{\CV}{{\cal V}}
\newcommand{\CP}{{\cal P}}
\newcommand{\CQ}{{\cal Q}}
\newcommand{\mq}{\mathcal{Q}}
\newcommand{\cu}{{\underline{\cup}}}
\newcommand{\ca}{{\underline{\cap}}}
\newcommand{\nb}{{\rm int}}
\newcommand{\Si}{\Sigma}
\newcommand{\si}{\sigma}
\newcommand{\Om}{\Omega}
\newcommand{\bm}{\bibitem}
\newcommand{\bv}{\bigvee}
\newcommand{\bw}{\bigwedge}
\newcommand{\lra}{\longrightarrow}
\newcommand{\tl}{\triangleleft}
\newcommand{\tr}{\triangleright}
\newcommand{\dda}{\downdownarrows}
\newcommand{\dia}{\diamondsuit}
\newcommand{\y}{{\bf y}}
\newcommand{\colim}{{\rm colim}}
\newcommand{\id}{\rm id}
\newcommand{\fR}{R^{\!\forall}}
\newcommand{\eR}{R_{\!\exists}}
\newcommand{\dR}{R^{\!\da}}
\newcommand{\uR}{R_{\!\ua}}
\newcommand{\swa}{{\swarrow}}
\newcommand{\sea}{{\searrow}}
\newcommand{\bbA}{{\mathbb{A}}}
\newcommand{\bbB}{{\mathbb{B}}}
\newcommand{\bbC}{{\mathbb{C}}}
\numberwithin{equation}{section}
\renewcommand{\theequation}{\thesection.\arabic{equation}}

\title {Flat ideals in the unit interval with the canonical fuzzy order
}
\author{ {Hongliang Lai, Dexue Zhang, Gao Zhang}\\
{\small School of Mathematics, Sichuan University,
Chengdu 610064, China}\\ {\small hllai@scu.edu.cn, dxzhang@scu.edu.cn, gaozhang0810@hotmail.com}}
\date{}
\maketitle

\begin{abstract} A characterization of flat ideals in the unit interval with the canonical fuzzy order is obtained with the help of the ordinal sum decomposition of continuous t-norms. This characterization will be useful in the study of topological and domain theoretic properties  of fuzzy orders.

\noindent\textbf{Keywords}  Fuzzy set; continuous triangular norm; fuzzy order; fuzzy lower set; flat ideal

\noindent \textbf{MSC(2010)} 03E72; 06A99; 03B52
\end{abstract}
\section{Introduction and preliminaries}
A commutative quantale \cite{Rosenthal1990} is a commutative monoid $(Q,\&, 1)$ such that $Q$ is a complete lattice and
\begin{equation*} p\&\bv_{j\in J}q_j=\bv_{j\in J}p\& q_j, ~ \Big(\bv_{j\in J}q_j\Big)\&p=\bv_{j\in J}q_j\&p. \end{equation*}
for all $p\in Q$ and   $\{q_j\}_{j\in J}\subseteq Q$. The unit $1$ of the monoid $(Q,\&, 1)$ is not necessarily  the top element of $Q$.
Given a commutative quantale $\CQ=(Q,\&, 1)$, since the semigroup operation $\&$ distributes over arbitrary joins, it  determines a binary operation $\ra$ on $Q$ via the adjoint property \begin{equation*} p\&q\leq r\iff q\leq p\ra r. \end{equation*}
The  binary operation $\ra$ is called the \emph{implication}   corresponding to  $\&$. So, commutative quantales are often the table of truth-values in many valued logic, with $\&$ playing the role of the connective ``conjunction", $\ra$ playing the role of the connective ``implication".

A \emph{$\mathcal{Q}$-order} (or an order valued in the quantale $\CQ$) \cite{Zadeh71,Wagner97} on a set $X$ is a reflexive and transitive $\mathcal{Q}$-relation on $X$. Explicitly, a $\mathcal{Q}$-order on  $X$  is a map $P: X\times X\lra Q$ such that $P(x,x)\geq1$   and  $P(y,z)\& P(x,y)\leq P(x,z)$ for any $x,y,z\in X$.  The pair $(X,P)$ is called a  $\mathcal{Q}$-ordered  set.
As usual, we write $X$ for the pair $(X, P)$ and $X(x,y)$ for $P(x,y)$ if no confusion would arise.


If $P: X\times X\lra Q$ is a $\CQ$-order on $X$, then $P^{\rm op}: X\times X\lra Q$, given by $P^{\rm op}(x,y)=P(y,x)$, is also a $\CQ$-order on $X$ (by commutativity of $\&$), called the opposite of $P$.

If we let $d_L(p,q)=p\ra q$ for all $p, q\in Q$, then  $d_L$ is a  $\CQ$-order on $Q$, called the canonical $\CQ$-order on $Q$. The opposite $d_R$ of $d_L$ is   given by $d_R(p,q)=q\ra p.$ Both of   $(Q,d_L)$ and $(Q,d_R)$ play important roles in the theory of $\CQ$-ordered sets.

A fuzzy lower set of a $\CQ$-ordered set $ X $ is a map $ \phi: X\lra Q$ such that  $\phi(x)\&X(y,x)\leq\phi(y) $ for all $ x,y\in X $. Dually, a fuzzy upper set of $ X $ is a map $ \psi: X\lra Q $ such that $X(x,y) \&\psi(x)\leq\psi(y)$ for all $ x,y\in X $.

Given a fuzzy lower set $ \phi $ and a fuzzy upper set $ \psi $ of a $\CQ$-ordered set $ X $, the tensor product of $\psi$ and $\phi$ is defined to be the  value \[ \phi \otimes \psi=\bv_{x\in X}\phi(x)\&\psi(x). \] Intuitively, $\phi \otimes \psi$ measures the degree that $ \phi $ intersects with $\psi$.

\begin{defn}(\cite{SV2005,LZZ17})
A  flat ideal in a $\CQ$-ordered $ X $ is a fuzzy lower set  $\phi $ of $X$ such that
\begin{enumerate}\setlength{\itemsep}{-3pt}
		\item[\rm(1)] $ \phi $ is inhabited in the sense that $\bv_{a\in X}\phi(a)\geq1$;
		\item[\rm(2)] $\phi$ is flat in the sense that for any fuzzy upper sets $\psi_1, \psi_2$ of $X$, \[\phi \otimes (\psi_1\wedge\psi_2)= (\phi\otimes\psi_1)\wedge (\phi\otimes\psi_1).  \]
	\end{enumerate}
\end{defn}

Flat ideals are a counterpart of directed lower sets, so, they play a crucial role in the study of the topological and domain theoretic properties of fuzzy orders. It should be noted that there exist different approaches to the notion of ``fuzzy ideals",  a comparative study can be found in \cite{LZZ17}.

In order to understand the structure of flat ideals in fuzzy ordered sets, the first step is to characterize the flat ideals in the table of truth-values, i.e., the flat ideals in   $\CQ$ with the canonical fuzzy order. Among the best known commutative quantales are the unit interval $[0,1]$ together with a continuous triangular norm (t-norm, for short). The importance of such quantales in fuzzy set theory cannot be over estimated. In particular, the  BL-logic of H\'{a}jek \cite{Ha98}, a very successful theory of fuzzy logic, is a logic based on such quantales. The aim of this note is to characterize flat ideals in the unit interval   with the canonical fuzzy order. The result is very likely to be useful in the theories of topological spaces and partially ordered sets based on BL-logic.

\section{Fuzzy lower sets of $([0,1],d_L)$}

A left continuous t-norm \cite{KMP00} on the unit interval $[0,1]$ is a binary operation $\&$ on [0,1] such that $([0,1],\&,1)$ is a commutative quantale. A left continuous t-norm $\&$ is said to be continuous if it is a continuous  map from $[0,1]^2$ to $[0,1]$ with respect to the usual topology.

\begin{exmp}
Basic continuous t-norms and their corresponding implications.
\begin{enumerate}\setlength{\itemsep}{-3pt}
\item[\rm(1)] The t-norm $\min$: \[ a\&b=a\wedge b=\min\{a,b\}, \quad a\ra b=\begin{cases}
		1,&a\leq b,\\
		b,&a>b.
		\end{cases} \]  The implication of the t-norm $\min$ is known as the G\"{o}del implication.
\item[\rm(2)] The product t-norm:  \[ a\&b=ab, \quad a\ra b=\begin{cases}
		1,&a\leq b,\\
		b/a,&a>b.
		\end{cases} \] The implication of the product t-norm is known as the Goguen implication and, as a binary operation, it is  continuous  except at the point $(0,0)$.
\item[\rm(3)] The {\L}ukasiewicz t-norm:\[ a\&b=\max\{0,a+b-1\}, \quad a\ra b=\min\{1-a+b,1\}. \] The implication of the {\L}ukasiewicz t-norm  is known as the {\L}ukasiewicz implication  and it is a continuous binary operation.
	\end{enumerate}
\end{exmp}

The following conclusion is essentially Proposition 2.3 in \cite{KMP00}.

\begin{prop}{\rm(\cite{KMP00})} \label{idempotent}
Let $\&$ be a continuous t-norm on $[0,1]$ and $c$ be an idempotent element of $\&$. Then $x\&y= x\wedge y $ whenever $x\leq c\leq y$.
 \end{prop}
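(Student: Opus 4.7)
The plan is to reduce the statement to the single sub-claim: if $c$ is idempotent and $x\leq c$, then $x\& c=x$. Once this is in hand, the proposition follows in one line, since for $x\leq c\leq y$ monotonicity of $\&$ gives
\[
 x \;=\; x\&c \;\leq\; x\&y \;\leq\; x\&1 \;=\; x,
\]
and since $x\leq y$ we have $x\wedge y=x$, so $x\& y=x=x\wedge y$. (Here I use that $x\& y\leq x\& 1=x$ and symmetrically $x\& y\leq y$, which follow from monotonicity, a consequence of the distributivity of $\&$ over joins stated in the definition of a quantale.)

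So the real content is the sub-claim. The approach is to apply the intermediate value theorem to the one-variable continuous function
\[
 f:[0,1]\lra[0,1],\qquad f(t)=t\& c.
\]
Continuity of $f$ comes from joint continuity of $\&$. We have $f(0)=0$ and $f(c)=c\& c=c$ by idempotency of $c$. Since $x\leq c$, the IVT produces some $t_0\in[0,c]$ with $t_0\& c=x$.

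Now exploit associativity and idempotency together:
\[
 x\& c \;=\; (t_0\& c)\& c \;=\; t_0\&(c\& c) \;=\; t_0\& c \;=\; x.
\]
This establishes the sub-claim and completes the proof. The only step that uses anything beyond the quantale axioms is the IVT, so continuity of $\&$ is used in exactly one place, namely to ensure the preimage $f^{-1}(x)$ is nonempty; the rest is pure algebra (associativity, commutativity, idempotency). The main obstacle, if any, is simply noting that $f$ is continuous as a one-variable restriction of the jointly continuous $\&$, and checking that the endpoint values $0$ and $c$ indeed bracket $x$, which is immediate from $0\leq x\leq c$.
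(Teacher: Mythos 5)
Your proof is correct: the reduction to the sub-claim $x\&c=x$ for $x\leq c$ via monotonicity is sound, and the IVT argument (write $x=t_0\&c$, then $x\&c=t_0\&(c\&c)=t_0\&c=x$ by associativity and idempotency) is exactly the standard argument. The paper itself gives no proof, citing \cite{KMP00} instead, and your argument is essentially the one found in that reference, so there is nothing to add.
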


As an immediate corollary of the above proposition we obtain that if $\&$ is a continuous t-norm and $a,b$ ($a<b$) are idempotent elements of $\&$, then the restriction of $\&$ on $[a,b]$ is a continuous t-norm on $[a,b]$, hence $([a,b],\&,b)$ is a commutative quantale.

The following result, known as the ordinal sum decomposition of continuous t-norms, is of fundamental importance in the theory of continuous t-norms.
\begin{thm}\label{ordinal sum decomposition theorem} {\rm(\cite{Mostert1957})}
 For each continuous t-norm $ \& $ on $[0,1]$, there exists a family of pairwise disjoint open intervals $ \{(a_j,b_j)\}_{j\in J} $ of $ [0,1] $ such that
	\begin{enumerate}\setlength{\itemsep}{-3pt}
		\item[\rm(1)] for each $ j\in J $, both $ a_j $ and $ b_j $ are idempotent and the restriction of $ \& $ on $ [a_j,b_j] $ is  either isomorphic to the {\L}ukasiewicz t-norm or to the product t-norm;
		\item[\rm(2)] $x\&y= x\wedge y$ if $(x,y)\notin\bigcup_j[a_j,b_j]^2$.
	\end{enumerate}
\end{thm}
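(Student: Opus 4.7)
The plan is to isolate the set of idempotents, decompose its complement into the desired open intervals, and then invoke the classification of Archimedean continuous t-norms to identify each summand. Let $E=\{c\in[0,1]\mid c\&c=c\}$ be the set of idempotents.

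First I would show that $E$ is a closed subset of $[0,1]$. Continuity of $\&$ makes the map $c\mapsto c\&c-c$ continuous, so $E=\{c:c\&c-c=0\}$ is closed. Trivially $\{0,1\}\subseteq E$. Hence $[0,1]\setminus E$ is open in $[0,1]$, and can be written as a pairwise disjoint countable union of open intervals $\{(a_j,b_j)\}_{j\in J}$ whose endpoints belong to $E\cup\{0,1\}$; since the endpoints lie in the closure of the complement of $E$ and $E$ is closed, they in fact lie in $E$. This gives the family required in item (1), with $a_j,b_j$ idempotent.

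Next I would establish item (2), which follows directly from Proposition \ref{idempotent}. Suppose $(x,y)\notin\bigcup_j[a_j,b_j]^2$ and without loss of generality $x\leq y$. If $x=y$, either $x\in E$ and the conclusion is trivial, or $x\in(a_j,b_j)$ for some $j$, contradicting the hypothesis. If $x<y$, the hypothesis forces the existence of an idempotent $c\in E$ with $x\leq c\leq y$: otherwise the whole interval $[x,y]$ would be contained in a single component $(a_j,b_j)$ of $[0,1]\setminus E$ (together with possibly its endpoints), putting $(x,y)$ inside $[a_j,b_j]^2$. Proposition \ref{idempotent} then yields $x\&y=x\wedge y$.

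Finally I would verify the structural claim in item (1): the restriction $\&_j$ of $\&$ to $[a_j,b_j]$ is, by the remark following Proposition \ref{idempotent}, a continuous t-norm on $[a_j,b_j]$ with unit $b_j$. Because $(a_j,b_j)$ contains no idempotent, $\&_j$ is Archimedean, i.e.\ for all $x,y\in(a_j,b_j)$ there exists $n$ with $x^n<y$ in the iterated sense. The main obstacle, which I expect to be the technical core of the proof, is the classification of continuous Archimedean t-norms: such a t-norm admits a continuous additive generator $t:[a_j,b_j]\to[0,+\infty]$, and $\&_j$ is isomorphic (after affinely rescaling $[a_j,b_j]$ to $[0,1]$) to the {\L}ukasiewicz t-norm if $t(a_j)<+\infty$ and to the product t-norm if $t(a_j)=+\infty$. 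The construction of this generator is the classical Mostert--Shields argument; one picks any $x_0\in(a_j,b_j)$, uses Archimedeanness and continuity to define $t$ on the dyadic powers of $x_0$, and extends by continuity and monotonicity to all of $[a_j,b_j]$, verifying that $t(x\&_j y)=\min\{t(a_j),t(x)+t(y)\}$. Once this classification is in hand, items (1) and (2) together complete the ordinal sum decomposition.
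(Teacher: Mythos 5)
The paper gives no proof of Theorem \ref{ordinal sum decomposition theorem}: it is quoted as a classical result of Mostert and Shields \cite{Mostert1957}, so there is no internal argument to compare yours against. Judged on its own, your outline follows the standard route and is structurally sound: the idempotent set $E$ is closed by continuity of $\&$, its complement splits into countably many disjoint open intervals with idempotent endpoints, and item (2) follows from Proposition \ref{idempotent} exactly as you argue (if no idempotent lay between $x$ and $y$, connectedness of $[x,y]$ would place it inside a single component, contradicting $(x,y)\notin\bigcup_j[a_j,b_j]^2$).

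Two points need more care. First, the step ``$(a_j,b_j)$ contains no idempotent, hence $\&_j$ is Archimedean'' is true but not automatic: the argument is that the decreasing sequence of powers $x^{(n)}$ converges to a point which, by continuity, is an idempotent of $[a_j,b_j]$ lying below $x$, hence equals $a_j$, and this yields the Archimedean property. Second, and more importantly, the entire weight of the theorem sits in the classification of continuous Archimedean t-norms by additive generators, which you only describe: constructing the generator (existence of roots via the intermediate value theorem, definition on dyadic powers, well-definedness and monotone extension, and the verification of $t(x\&y)=\min\{t(a_j),t(x)+t(y)\}$ in the nilpotent case) is precisely the nontrivial content of \cite{Mostert1957}. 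So your proposal is an accurate sketch of the received proof rather than a complete argument; since the paper itself delegates the statement to the literature, that is a defensible position, but as a standalone proof it remains incomplete at its core step.
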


Therefore, for a continuous t-norm $\&$ on $[0,1]$ and  $c\in[0,1] $, if we let \begin{align*}
c^+&=\inf\{x\in [0,1]\mid x\geq c, x\&x=x\}, \\
c^-&=\sup\{x\in [0,1]\mid x\leq c, x\&x=x\},
\end{align*}
then \begin{itemize}\setlength{\itemsep}{-3pt}
\item Both $ c^+ $ and $c^- $ are idempotent.
\item  If $c$ is idempotent then $ c=c^+=c^- $.
\item  For each non-idempotent element $c$, the restriction of $\&$ on $[c^-,c^+]$ is   either isomorphic to the {\L}ukasiewicz t-norm or to the product t-norm.  Furthermore, the implication operation  in the quantale $([c^-,c^+],\&,c^+)$ is given by \begin{equation*}\label{cal of implication} x\ra^cy=\min\{c^+,x\ra y\}\end{equation*} for all $x,y\in[c^-,c^+]$.  \end{itemize}

The purpose of this section is to characterize the fuzzy lower sets and fuzzy upper sets in $([0,1],d_L)$.  In the case that the t-norm is one of the basic continuous t-norms, such characterizations have been presented in \cite{LZ06}: Proposition \ref{fl1} and Proposition \ref{fu1}. We  use the ordinal sum decomposition of continuous t-norms to establish the desired characterizations in the general case: Proposition \ref{fl} and Proposition \ref{fu}.

\begin{prop} \label{fl1} {\rm(\cite{LZ06}, fuzzy lower sets of $ ([0,1],d_L) $, basic cases)}
Let $\phi: [0,1]\lra[0,1] $ be a map. 	\begin{enumerate}\setlength{\itemsep}{-3pt}
\item[\rm(1)] If $ \& $ is the {\L}ukasiewicz t-norm, then   $\phi$  is a fuzzy lower set of $ ([0,1],d_L) $ if and only if it is decreasing and 1-Lipschitz.
\item[\rm(2)] If $ \& $ is the product t-norm, then   $\phi$  is a fuzzy lower set of $ ([0,1],d_L) $ if and only if it is decreasing and  $y/x\leq \phi(x)/\phi(y)$ whenever $ x>y $.
\item[\rm(3)] If $ \& $ is the t-norm $\min$, then $\phi$  is a fuzzy lower set of $ ([0,1],d_L) $ if and only if it is decreasing and for all $ x\in[0,1]$, $\phi(x)\leq x$ implies that $\phi(x)=\phi(1) $.
	\end{enumerate}
\end{prop}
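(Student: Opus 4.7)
The plan is to verify the defining inequality
\[\phi(x)\,\&\,(y\ra x)\leq \phi(y)\qquad(x,y\in[0,1])\]
separately in the two regions $y\leq x$ and $y>x$. For every continuous t-norm, $y\ra x=1$ when $y\leq x$, so the inequality collapses to $\phi(x)\leq\phi(y)$; this yields the common condition ``$\phi$ is decreasing'' in all three parts. Consequently, the three cases differ only in what the inequality says on the upper triangle $y>x$, where I substitute the explicit formulas for $y\ra x$ and $\&$ given in the introductory example.

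In the {\L}ukasiewicz case, $y\ra x=1-y+x$ and $\phi(x)\,\&\,(1-y+x)=\max\{0,\phi(x)-(y-x)\}$, so the inequality is equivalent (given decrease) to $0\leq\phi(x)-\phi(y)\leq y-x$, i.e.\ the 1-Lipschitz condition. In the product case, $y\ra x=x/y$, and $\phi(x)\cdot x/y\leq\phi(y)$ rearranges to $y/x\leq\phi(x)/\phi(y)$, with the edge $\phi(y)=0$ treated separately. In the G\"odel case, $y\ra x=x$, and the inequality reads $\min\{\phi(x),x\}\leq\phi(y)$; its forward direction follows from noting that when $\phi(x)\leq x$ the minimum equals $\phi(x)$, and combining this with decrease forces $\phi(x)=\phi(y)$ for every $y\geq x$, in particular $\phi(x)=\phi(1)$.

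The converse directions in (1) and (2) run the same computations backwards and are routine once one has set up the right algebraic manipulation. The main obstacle will be the converse in (3). Assuming $\phi$ decreasing and that $\phi(x)\leq x$ implies $\phi(x)=\phi(1)$, I must verify $\min\{\phi(x),x\}\leq\phi(y)$ for every $y>x$. The subcase $\phi(x)\leq x$ is immediate because then the minimum equals $\phi(x)=\phi(1)\leq\phi(y)$ by decrease. The subcase $\phi(x)>x$ reduces to showing $x\leq\phi(y)$, a bound that does not mention $\phi(x)$ and must therefore be extracted by applying the hypothesis at $y$ rather than at $x$: if one had $\phi(y)<x$, then $\phi(y)<y$ would trigger $\phi(y)=\phi(1)$, and one would then close the argument with a monotonicity chase. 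This last step is the delicate piece, and the place where I anticipate the most care is needed.
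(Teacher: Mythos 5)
Your region-splitting verification is the natural route here (the paper itself offers no proof of this proposition, it only cites \cite{LZ06}), and your treatment of parts (1), (2) and of the necessity half of (3) is correct; the only blemish there is a harmless variable swap in the product case: with your convention $y>x$, the inequality $\phi(x)\cdot x/y\leq\phi(y)$ rearranges to $x/y\leq\phi(y)/\phi(x)$ (equivalently $x\phi(x)\leq y\phi(y)$), which is the proposition's condition with the roles of $x$ and $y$ interchanged.

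The step you flagged as delicate in the converse of (3) is, however, a genuine gap, and no monotonicity chase will close it: from $\phi(x)>x$, $y>x$, $\phi(y)<x$ and $\phi(y)=\phi(1)$ no contradiction follows. Concretely, let $\&=\min$ and let $\phi$ be the characteristic map of $[0,1/2]$, i.e.\ $\phi(t)=1$ for $t\leq 1/2$ and $\phi(t)=0$ for $t>1/2$. This $\phi$ is decreasing and satisfies ``$\phi(x)\leq x$ implies $\phi(x)=\phi(1)$'' (the premise only fires for $x>1/2$, where $\phi(x)=0=\phi(1)$), yet $\phi(1/2)\wedge(1\ra 1/2)=1/2>0=\phi(1)$, so it is not a fuzzy lower set. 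What your subcase $\phi(x)>x$ actually needs is the extra condition that appears as (L4) in the paper's general Proposition \ref{fl}, specialized to $\min$ (where every element is idempotent): $\phi(x)\geq x$ must force $\phi(1)\geq x$. With that hypothesis your subcase closes at once, since then $x\leq\phi(1)\leq\phi(y)$; and it is necessary, as one sees by applying the defining inequality to the pair $(x,1)$. So either add this (L4)-type condition to what you prove in part (3), or record that the backward implication of (3) as quoted cannot be established because the quoted condition omits it.
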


\begin{prop}\label{fl} {\rm(Fuzzy lower sets of $ ([0,1],d_L) $, general case)}
Let $\&$ be a continuous t-norm on $[0,1]$. Then a map $\phi: [0,1]\lra[0,1]$ is a fuzzy lower set of $ ([0,1],d_L) $ if and only if $ \phi $ satisfies the following conditions:																			\begin{enumerate}\setlength{\itemsep}{-3pt}
		\item[\rm(L1)] If $ a\leq b $ then $ \phi(a)\geq\phi(b) $.
		\item[\rm(L2)] If $ \phi(c)\leq c^- $ then $ \phi(c)=\phi(1) $.
		\item[\rm(L3)] If $c$ is not idempotent and $\phi(c^-)\geq c^-$, then   $\phi(x)\geq c^-$ for all $ x\in[c^-,c^+] $ and  the correspondence $x\mapsto c^+\wedge\phi(x)$ defines a fuzzy lower set of the fuzzy ordered set $([c^-,c^+],d_L^c)$ valued in the quantale $([c^-,c^+],\&,c^+)$, where $d_L^c(x,y)=x\ra^cy=\min\{c^+,x\ra y\}$.  \item[\rm(L4)] If $c$ is idempotent and $\phi(c)\geq c$ then $\phi(1)\geq c$.																																					\end{enumerate}																																						\end{prop}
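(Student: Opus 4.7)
My plan is to verify both directions directly using the adjoint property of $\&$ and $\ra$ together with Proposition \ref{idempotent}. For necessity, I would derive each of (L1)--(L4) by specializing the defining inequality $\phi(x)\&(y\ra x)\leq \phi(y)$. (L1) is immediate on taking $y\leq x$, so that $y\ra x=1$. Both (L2) and (L4) come from the instance $y=1$, which gives $\phi(c)\& c\leq \phi(1)$: when $\phi(c)\leq c^-$, Proposition \ref{idempotent} collapses $\phi(c)\& c$ to $\phi(c)$, yielding $\phi(c)\leq \phi(1)$ and hence equality by (L1); when $\phi(c)\geq c$ with $c$ idempotent, the same proposition collapses $\phi(c)\& c$ to $c$, yielding $\phi(1)\geq c$. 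For (L3), applying the inequality with $x=c^-$ together with the evaluation $y\ra c^-=c^-$ for $y\in(c^-,c^+]$ (again by Proposition \ref{idempotent}) gives $\phi(y)\geq c^-$; the capped map $c^+\wedge\phi$ is then a fuzzy lower set of the block because both $c^+\wedge\phi(x)\leq\phi(x)$ and $y\ra^c x\leq y\ra x$.

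For sufficiency, I assume (L1)--(L4) and prove $\phi(x)\&(y\ra x)\leq \phi(y)$ for all $x,y$. The case $y\leq x$ is trivial. For $y>x$, the ordinal sum decomposition yields a clean dichotomy: either $x$ and $y$ both lie strictly inside the same nontrivial block $(c^-,c^+)$, in which case a direct computation using Proposition \ref{idempotent} gives $y\ra x\in(x,c^+)$; or $x$ and $y$ are separated by an idempotent, in which case the same kind of computation gives $y\ra x=x$. In the first case I would split on whether $\phi(c^-)\geq c^-$: if not, (L2) applied at $c=c^-$ forces $\phi$ to be constant on $[c^-,1]$ and the desired inequality is trivial; if so, (L3) applies, and the possibility $\phi(x)>c^+$ is absorbed by Proposition \ref{idempotent} since the idempotent $c^+$ then sits between $\phi(x)$ and $y\ra x$.

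In the separated case, where $y\ra x=x$, the target becomes $\phi(x)\& x\leq \phi(y)$. If $\phi(x)\leq x^-$, then (L2) gives $\phi(x)=\phi(1)\leq\phi(y)$. If $x$ is idempotent with $\phi(x)>x$, then (L4) gives $\phi(1)\geq x$, and Proposition \ref{idempotent} evaluates $\phi(x)\& x$ as $x$, so $\phi(y)\geq \phi(1)\geq x=\phi(x)\& x$. The remaining sub-case is that $x$ is not idempotent and $\phi(x)>x^-$; here (L3) applies to the block $[x^-,x^+]$, and using $x^+\ra x=x$ the block inequality reduces to $\phi(x)\& x\leq \phi(x^+)$. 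A final dichotomy on $\phi(x^+)$ (apply (L2) if $\phi(x^+)<x^+$, (L4) if $\phi(x^+)\geq x^+$), combined with the bound $\phi(x)\& x\leq x$, closes the argument.

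The main obstacle I anticipate is precisely this last sub-case: because the fuzzy lower set inequality does not chain transitively through idempotents, one cannot simply prove the inequality block-by-block. Inside $[x^-,x^+]$ one must invoke (L3), and then separately invoke (L4) at the boundary idempotent $x^+$; the apparently wasteful bound $\phi(x)\& x\leq x$ is what rescues the situation when $\phi(x^+)\geq x^+$ forces $\phi(1)\geq x^+>x$.
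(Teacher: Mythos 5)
Your necessity argument and most of your sufficiency argument are sound and run parallel to the paper's, but there is a genuine gap in the case analysis of the sufficiency direction, and it stems from a false evaluation of the residuum at the bottom of a block. You claim that $y\ra c^-=c^-$ for $y\in(c^-,c^+]$, and more generally that whenever $x$ and $y$ are ``separated by an idempotent'' one has $y\ra x=x$. This is true only when some idempotent lies in the half-open interval $(x,y]$; it fails when the only idempotent of $[x,y]$ is $x$ itself, i.e.\ when $x=c^-$ and $y$ is interior to a block of {\L}ukasiewicz type. For instance, for the {\L}ukasiewicz t-norm on $[0,1]$ with $x=0$, $y=\tfrac12$ one has $y\ra x=\tfrac12>x$, and the required inequality is $\phi(0)\&\tfrac12=\max\{0,\phi(0)-\tfrac12\}\leq\phi(\tfrac12)$, which is \emph{not} implied by the statement $\phi(x)\&x\leq\phi(y)$ that your ``separated'' reduction proves. (In the necessity part the same false evaluation is harmless, since there you only need $\phi(c^-)\&(y\ra c^-)\geq c^-\&c^-=c^-$.) Moreover this configuration is not caught by your first case either, because you restricted it to $x,y$ \emph{strictly} inside $(c^-,c^+)$. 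So the pair ``$x$ the lower idempotent of a {\L}ukasiewicz block, $y$ interior to that block'' is handled by neither branch as written.

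The repair is exactly the paper's case split: let the block case cover all pairs with $c^-\leq x<y\leq c^+$, endpoints included (your own argument there never uses strict interiority: if $\phi(c^-)\leq c^-$ then (L2) makes $\phi$ constant on $[c^-,1]$; otherwise (L3) applies, and $\phi(x)>c^+$ is absorbed via Proposition \ref{idempotent} since $y\ra x<c^+$), and reserve the identity $y\ra x=x$ for the case where an idempotent lies strictly above $x$ and at most $y$ (the paper uses ``strictly between'', which together with the inclusive block case is exhaustive). With that correction your ``separated'' argument — the dichotomy $\phi(x)\leq x^-$ / $x$ idempotent with $\phi(x)\geq x$ / $x$ non-idempotent with $\phi(x)>x^-$, the reduction via $x^+\ra x=x$ to $\phi(x)\&x\leq\phi(x^+)$, and the final split on $\phi(x^+)$ using (L2) or (L4) together with the bound $\phi(x)\&x\leq x$ — is correct and is essentially a mild variant of the paper's Subcase 3, which instead shows $\phi(a^+)=\phi(b)<a$ and reads the bound off the block inequality at the pair $(a,a^+)$.
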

\begin{proof} 																																							We prove the necessity first.
	
	(L1) Obvious.
	
(L2) This follows from (L1) and that $\phi(c)=\phi(c)\wedge c^-=\phi(c)\&c^-\leq\phi(c)\&(1\ra c)\leq\phi(1).$

(L3) For each $x\in[c^-,c^+]$, since $c^-\leq\phi(c^-)$, then $c^-=c^-\&\phi(c^-)\leq (x\ra c^-)\&\phi(c^-)\leq\phi(x)$.
Hence $\phi(x)\geq c^-$ for all $ x\in[c^-,c^+] $.
It remains to check that																																									$d_L^c(\sigma(a),\sigma(b))\geq d_L^c(b, a)$ whenever $ c^+\geq b>a\geq c^-$.  																																								We calculate: \begin{align*}d_L^c(\sigma(a),\sigma(b))& =\min\{c^+,d_L(\sigma(a),\sigma(b))\}\\ &=\min\{c^+,d_L(c^+\wedge\phi(a),c^+\wedge\phi(b))\} \\ &\geq \min\{c^+,d_L(\phi(a),\phi(b))\}\\ &\geq \min\{c^+,d_L(b,a)\}\\ &= d_L^c(b, a).\end{align*}
	
(L4) Since $c\leq\phi(c)$, then $c=c\&\phi(c)=(1\ra c)\&\phi(c)\leq\phi(1).$
	
Now we prove the sufficiency. That is,   																																					$ \phi(a)\ra\phi(b)\geq b\ra a$ whenever $ b> a $.
	
\textbf{Case 1}. There exists some $c$ such that $ c^-\leq a< b\leq c^+ $.
	
If $ \phi(c^-)\leq c^- $ then $\phi(c^-)=\phi(a)=\phi(b)=\phi(1) $ by  (L2), hence $ \phi(a)\ra\phi(b)=1\geq b\ra a$.

If $ \phi(c^-)> c^- $ then,  by (L3), $\phi(a),\phi(b)\geq c^-$ and \[\min\{c^+, \phi(a)\ra\phi(b)\}=d_L^c(\sigma(a),\sigma(b))\geq d_L^c(b,a)=b\ra a,\] hence $ \phi(a)\ra\phi(b)\geq b\ra a$.
	
\textbf{Case 2}. There exists some idempotent element $c$ such that $ b>c>a $.  It is clear that $b\ra a=a$. Then we proceed with three subcases.
	
\emph{Subcase 1}.  $ \phi(a)\leq a^- $. Then $ \phi(a)=\phi(b)=\phi(1) $ by (L2), hence $\phi(a)\ra\phi(b)=1\geq b\ra a$.
	
\emph{Subcase 2}.   $\phi(b)\geq a $. Then $ \phi(a)\ra\phi(b)\geq \phi(b)\geq a$.
	
\emph{Subcase 3}. $ \phi(a)>a^- $ and $ \phi(b)< a$. First, we show that $a$ is not idempotent and $ \phi(a^+)< a$. If either $a$ is idempotent or $ \phi(a^+)\geq a^+ $, then, by   (L4), we have either $\phi(1)\geq a$ or $ \phi(1)\geq a^+$,   contradicting $ \phi(b)< a $. This shows that $a$ is not  idempotent  and $ \phi(a^+)< a^+$. Then, by (L2), we have $\phi(a^+)=\phi(1)$, hence $\phi(a^+)=\phi(b)<a$. Now, applying (L3) to $a$ we obtain that \begin{align*}a=d_L^a(a^+,a)\leq d_L^a(\phi(a)\wedge a^+,\phi(a^+))&= \phi(a^+)\vee \min\{a^+,\phi(a)\ra \phi(a^+)\},\end{align*} hence   $ \phi(a)\ra\phi(b) =\phi(a)\ra \phi(a^+) \geq a $.
\end{proof}

The following characterization of fuzzy upper sets of $([0,1],d_L) $,   Proposition \ref{fu}, will be used in the next section. Since its proof is similar to that of Proposition \ref{fl}, the details are omitted.

\begin{prop} \label{fu1} {\rm(\cite{LZ06}, fuzzy upper sets of $([0,1],d_L)$, basic cases)} Let $\psi: [0,1]\lra[0,1]$ be a map.
\begin{enumerate}\setlength{\itemsep}{-3pt}
\item[\rm(1)] If $ \& $ is the {\L}ukasiewicz t-norm, then $ \psi $ is a fuzzy upper set of $ ([0,1],d_L) $ if and only if it is increasing and 1-Lipschitz.
\item[\rm(2)] If $ \& $ is the product t-norm, then $ \psi $ is a fuzzy upper set of $ ([0,1],d_L) $ if and only if it is increasing and $y/x\leq \psi(y)/\psi(x)$  whenever $ x>y $.
\item[\rm(3)] If $ \& $ is the t-norm $\min$, then $ \psi $ is a fuzzy upper set of $ ([0,1],d_L) $ if and only if it is increasing and  for every $ x\in[0,1]$, $\psi(x)< x$ implies $\psi(x)=\psi(1)$.
\end{enumerate}
\end{prop}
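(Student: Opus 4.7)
The plan is to verify each of the three parts directly from the defining condition of a fuzzy upper set on $([0,1],d_L)$,
\[ (x\ra y)\,\&\,\psi(x)\leq\psi(y) \quad \text{for all } x,y\in[0,1], \]
by unpacking the explicit formulas for $\&$ and $\ra$ in each basic case. In all three cases, the sub-case $x\leq y$ gives $x\ra y=1$ so the inequality collapses to $\psi(x)\leq\psi(y)$, delivering monotonicity; the genuine content lives in the sub-case $x>y$. In principle one might hope to obtain Proposition \ref{fu1} from Proposition \ref{fl1} by passing to the opposite, but $d_L^{\rm op}=d_R\neq d_L$, so direct verification is cleanest.

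For the {\L}ukasiewicz t-norm, when $x>y$ one computes $(1-x+y)\,\&\,\psi(x)=\max\{0,\psi(x)-(x-y)\}$; the upper-set inequality then rearranges to $\psi(x)-\psi(y)\leq x-y$, which together with monotonicity is precisely the 1-Lipschitz condition. For the product t-norm, when $x>y$ one has $(x\ra y)\,\&\,\psi(x)=y\psi(x)/x$, and the inequality becomes $y/x\leq\psi(y)/\psi(x)$ (the case $\psi(x)=0$ being trivial, since then $\psi(y)=0$ by monotonicity and $(x\ra y)\,\&\,0\leq 0$ is automatic). Both of these cases are essentially one-line algebraic verifications.

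The t-norm $\min$ is the most delicate case; when $x>y$ the condition reads $y\wedge\psi(x)\leq\psi(y)$. For necessity, I specialize to $x=1$: if $\psi(x_0)<x_0$, then $x_0\wedge\psi(1)\leq\psi(x_0)<x_0$ forces $\psi(1)<x_0$, hence $\psi(1)\leq\psi(x_0)$, and monotonicity gives $\psi(x_0)=\psi(1)$. For sufficiency, assuming $\psi$ is increasing and that $\psi(x)<x\Rightarrow\psi(x)=\psi(1)$, I verify $y\wedge\psi(x)\leq\psi(y)$ for $x>y$ by splitting on whether $\psi(x)>y$ or $\psi(x)\leq y$. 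In the first sub-case, supposing $\psi(y)<y$ would force $\psi(y)=\psi(1)\geq\psi(x)>y$, a contradiction, so $\psi(y)\geq y$ and the inequality holds. In the second, $\psi(x)\leq y<x$ yields $\psi(x)=\psi(1)$, after which a further split on $\psi(y)<y$ versus $\psi(y)\geq y$ pins down $\psi(y)=\psi(1)=\psi(x)$ in either branch.

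I expect the main obstacle to be keeping the nested case analysis in part~(3) tidy; parts~(1) and (2) reduce to direct computations, whereas the $\min$ case is where the hypothesis $\psi(x)<x\Rightarrow\psi(x)=\psi(1)$ must be invoked twice, once to derive the contradiction in the first sub-case and once to identify $\psi(y)$ with $\psi(1)$ in the second.
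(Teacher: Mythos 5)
Your verification is correct. Note, however, that the paper gives no proof of this proposition at all: it is quoted from \cite{LZ06}, and even the general-case analogue (Proposition \ref{fu}) has its proof omitted as ``similar to that of Proposition \ref{fl}''. So there is no in-paper argument to compare against; your direct unpacking of the defining inequality $(x\ra y)\&\psi(x)\leq\psi(y)$, with the case $x\leq y$ giving monotonicity and the case $x>y$ giving the respective quantitative condition, is exactly the kind of routine computation the authors delegate to the reference. All three parts check out: the {\L}ukasiewicz case reduces to $\psi(x)-\psi(y)\leq x-y$, the product case to $y\,\psi(x)\leq x\,\psi(y)$ (your handling of the degenerate $\psi(x)=0$ situation, where the displayed quotient is formally undefined, is the right reading), and in the G\"odel case your necessity argument (take $x=1$, with $x_0=1$ trivial) and the two-way split in sufficiency are both sound --- in the branch $\psi(x)\leq y$, $\psi(y)\geq y$ one even gets $\psi(y)=\psi(x)$ from monotonicity, as you claim, though the weaker inequality $\psi(x)\leq y\leq\psi(y)$ already suffices. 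No gaps.
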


\begin{prop}\label{fu} {\rm(Fuzzy upper sets in $ ([0,1],d_L) $, general case)}
A map $ \psi:[0,1]\lra[0,1] $ is a fuzzy upper set of $ ([0,1],d_L) $ if and only if $ \psi $ satisfies the following conditions:
\begin{enumerate}\setlength{\itemsep}{-3pt}
\item[\rm(U1)] If $ a\leq b $ then $ \psi(a)\leq\psi(b) $.
\item[\rm(U2)] If $ \psi(c)< c^- $ then $ \psi(c)=\psi(1) $.
\item[\rm(U3)] If $ c $ is not idempotent and $ \psi(c^-)\geq c^- $, then $ \psi(x)\geq c^- $ for all $ x\in[c^-,c^+] $ and the correspondence $x\mapsto c^+\wedge\psi(x)$ defines a fuzzy upper set of the fuzzy ordered set $([c^-,c^+],d_L^c)$ valued in the quantale $([c^-,c^+],\&,c^+)$, where $d_L^c(x,y)=x\ra^cy=\min\{c^+,x\ra y\}$.
\end{enumerate}
\end{prop}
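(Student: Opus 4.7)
The plan is to mirror the proof of Proposition \ref{fl} line by line, flipping the direction of monotonicity throughout; no analog of (L4) is needed because (U1) already supplies $\psi(1) \geq \psi(c)$. For necessity, (U1) follows from the defining axiom at $(a,b)$ with $a \leq b$, since $a \ra b = 1$. For (U2), the axiom at $(1,c)$ gives $c \& \psi(1) = (1 \ra c) \& \psi(1) \leq \psi(c) < c^-$; Proposition \ref{idempotent} rules out $\psi(1) \geq c^-$ (else $c \& \psi(1) \geq c^-$ by the same proposition), whence $\psi(1) < c^-$ and Proposition \ref{idempotent} gives $c \& \psi(1) = \psi(1) \leq \psi(c)$, forcing equality via (U1). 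For (U3)(a), $c^- \ra x = 1$ for $x \in [c^-,c^+]$ yields $\psi(x) \geq \psi(c^-) \geq c^-$. For (U3)(b), with $\tau(x) = c^+ \wedge \psi(x)$, the same chain of inequalities displayed in the proof of (L3) delivers $d_L^c(\tau(x), \tau(y)) \geq d_L^c(x, y)$ with only the symbols relabelled.

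For sufficiency, the goal is $\psi(x) \ra \psi(y) \geq x \ra y$; the case $x \leq y$ is trivial, so I fix $x > y$ and distinguish Case 1, where $c^- \leq y < x \leq c^+$ for some $c$, and Case 2, where an idempotent $e$ lies strictly between $y$ and $x$ (so that $x \ra y = y$). Case 1 runs exactly as in Proposition \ref{fl}: if $\psi(c^-) < c^-$ then (U2) collapses $\psi$ to $\psi(1)$ on $[c^-, 1]$ giving $\psi(x) = \psi(y)$, and otherwise (U3)(b) converts the restricted inequality into the desired one. In Case 2, the easy subcase is $\psi(y) \geq y$, which gives $y \& \psi(x) \leq y \leq \psi(y)$ at once. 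When $\psi(y) < y$, (U2) forces $\psi(x) = \psi(y)$ in each of the subcases $y$ idempotent, $\psi(y) < y^-$, or $\psi(y^-) < y^-$. The remaining situation is $y$ non-idempotent, $y^- \leq \psi(y) < y$, and $\psi(y^-) \geq y^-$: here I apply (U3)(b) to the pair $(y^+, y)$ inside $[y^-, y^+]$ to force $\psi(y^+) < y^+$ (otherwise the restricted axiom reads $y \leq \psi(y)$, contradicting $\psi(y) < y$), then invoke (U2) at $y^+$ to obtain $\psi(y^+) = \psi(1) = \psi(x)$, and finally read $\psi(y^+) \ra \psi(y) \geq y$ off the same restricted axiom, giving $\psi(x) \ra \psi(y) \geq y = x \ra y$.

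The main obstacle is this last subcase of Case 2, where three applications of the hypotheses — (U3)(b) at $(y^+, y)$ to force $\psi(y^+) < y^+$, (U2) at $y^+$ to propagate the value up to $\psi(x)$, and (U3)(b) once more to extract $\psi(y^+) \ra \psi(y) \geq y$ — must be chained in the right order to bridge the information (U3) supplies on $[y^-, y^+]$ with the value $\psi(x)$ above $y^+$; everything else is either immediate or a direct translation from the proof of Proposition \ref{fl} via Proposition \ref{idempotent}.
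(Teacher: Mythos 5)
Your proposal is correct and follows essentially the same route the paper intends: the paper omits the proof of Proposition \ref{fu} precisely because it is the order-dual of the proof of Proposition \ref{fl}, and your necessity arguments, the two-case split in the sufficiency part, and the final subcase (forcing $\psi(y^+)<y^+$ via the restricted axiom on $[y^-,y^+]$, then using (U2) to identify $\psi(y^+)$ with $\psi(1)=\psi(x)$ and reading off $\psi(y^+)\ra\psi(y)\geq y$) match the authors' own (dualized) argument step for step. The only nitpick is attributional: the inequality $c\&\psi(1)\geq c^-$ when $\psi(1)\geq c^-$ follows from monotonicity of $\&$ and idempotency of $c^-$ rather than from Proposition \ref{idempotent} itself, but the claim is true and the proof stands.
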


\section{Flat ideals in $([0,1],d_L)$}

In this section, we characterize flat ideals in $([0,1],d_L)$. The strategy is to do this in the case that $\&$ is one of the   basic continuous t-norms, then in the general case with help of the ordinal sum decomposition of continuous t-norms.

Given a fuzzy ordered set $ X $, a net $ \{x_\lambda\}_{\lambda\in D}$ in $ X $ is forward Cauchy \cite{Wagner97} if \[ \bigvee_{\lambda\in D}\bigwedge_{\sigma\geq\mu\geq\lambda}X(x_{\mu},x_{\sigma})=1. \]
A fuzzy lower set $ \phi $ of a fuzzy ordered set $X$ is a forward Cauchy ideal if there exists some forward Cauchy net $ \{x_{\lambda}\}_{\lambda\in D} $ such that \[ \phi=\bigvee_{\lambda\in D}\bigwedge_{\mu\geq\lambda}X(-,x_{\mu}). \]

The following theorem was proved in \cite{SV2005} in the case that $\&$ is isomorphic to the product t-norm, and in \cite{LLZ17} in the case that $\&$ is isomorphic to the {\L}ukasiewicz t-norm.

\begin{thm}{\rm(\cite{SV2005,LLZ17})} \label{flat=forward Cauchy}
Let $ \& $ be a continuous   t-norm on $[0,1]$ that is  either isomorphic to the product t-norm or to the {\L}ukasiewicz t-norm, and  $\CQ$ be the quantale $([0,1],\&,1)$. Then for each $\CQ$-ordered set $X$,  flat ideals  coincide with forward Cauchy ideals.
\end{thm}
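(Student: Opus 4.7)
The plan is to prove the two inclusions separately. The forward Cauchy implies flat direction is easier and holds more generally; the flat implies forward Cauchy direction is where the specific t-norm structure enters.

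For the forward Cauchy $\Rightarrow$ flat direction, I would first establish the auxiliary identity that for any forward Cauchy net $\{x_\lambda\}_{\lambda\in D}$ with associated ideal $\phi=\bigvee_\lambda\bigwedge_{\mu\geq\lambda}X(-,x_\mu)$ and any fuzzy upper set $\psi$ of $X$,
\[ \phi\otimes\psi \;=\; \bigvee_{\lambda\in D}\bigwedge_{\mu\geq\lambda}\psi(x_\mu). \]
The $\geq$ inequality follows by restricting the join in $\bigvee_y\phi(y)\&\psi(y)$ to $y=x_\nu$, bounding $\phi(x_\nu)\geq\bigwedge_{\sigma\geq\lambda}X(x_\nu,x_\sigma)$, and propagating through $\psi$ via $X(x_\nu,x_\mu)\&\psi(x_\nu)\leq\psi(x_\mu)$. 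The $\leq$ inequality uses the forward Cauchy condition to absorb the supremum over arbitrary $y\in X$ into suprema over the net. Once this identity is in hand, flatness is essentially formal: meets commute with meets inside $\bigwedge_{\mu\geq\lambda}$, and the directedness of $D$ lets the directed join of decreasing families commute with a binary meet.

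For the flat $\Rightarrow$ forward Cauchy direction, the strategy is to manufacture a forward Cauchy net that represents $\phi$. I would index it by pairs $(\epsilon,F)$, where $\epsilon$ is a tolerance shrinking towards $1$ (additively for {\L}ukasiewicz, multiplicatively for the product t-norm) and $F$ is a finite subset of approximate support of $\phi$, ordered in the obvious way. For each index I pick some $x_{(\epsilon,F)}\in X$ with $\phi(x_{(\epsilon,F)})$ close to $1$ and with $X(y,x_{(\epsilon,F)})$ close to $1$ for $y\in F$, using the inhabitation of $\phi$ together with flatness applied to the fuzzy upper sets $X(y,-)$ for $y\in F$. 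To then verify the Cauchy condition, I would apply flatness with $\psi_i(z)=X(x_{\lambda_i},z)$ for two earlier index values $\lambda_1,\lambda_2$: the left-hand side $\phi\otimes(\psi_1\wedge\psi_2)$ stays large (since both $\psi_i$ have value $1$ at $x_{\lambda_i}$ and $\phi$ is large there), which, by flatness, forces one of the tail elements to approximate both $x_{\lambda_1}$ and $x_{\lambda_2}$ simultaneously.

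The main obstacle is this second direction: flatness is an extrinsic statement about how $\phi$ pairs with upper sets, and one has to convert it into intrinsic closeness estimates on $X(x_\mu,x_\sigma)$ for pairs of net elements. The crucial feature of the {\L}ukasiewicz and product t-norms that makes this conversion work is their cancellativity (additive, respectively multiplicative) on the non-idempotent part, which guarantees that $a\&b$ being close to $1$ implies each of $a,b$ is close to $1$ in a controllable way. This is precisely what fails for the t-norm $\min$ and explains why the hypothesis on $\&$ cannot simply be dropped. I would expect the two cases to require parallel but technically distinct calculations, mirroring the separate treatments in \cite{LLZ17} and \cite{SV2005}.
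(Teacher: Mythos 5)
You are in a slightly unusual situation here: the paper does not prove Theorem \ref{flat=forward Cauchy} at all; it imports it, citing \cite{SV2005} for the product case and \cite{LLZ17} for the {\L}ukasiewicz case. So the comparison can only be with those proofs in spirit. Your first direction (forward Cauchy $\Rightarrow$ flat) is correct and standard: the identity $\phi\otimes\psi=\bigvee_{\lambda}\bigwedge_{\mu\geq\lambda}\psi(x_\mu)$ holds, the families $\bigwedge_{\mu\geq\lambda}\psi_i(x_\mu)$ increase along the directed set $D$, and in $[0,1]$ directed joins commute with binary meets; as you note, this half needs no hypothesis on $\&$ beyond continuity.

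The gap is in the direction flat $\Rightarrow$ forward Cauchy, and it is not merely technical. First, your choice condition --- $\phi(x_{(\epsilon,F)})$ close to $1$ and $X(y,x_{(\epsilon,F)})$ close to $1$ for $y$ in a finite subset $F$ of the \emph{approximate support} --- is the wrong target and cannot identify $\phi$ with the generated ideal: you must prove $\phi(y)=\bigvee_{\lambda}\bigwedge_{\mu\geq\lambda}X(y,x_\mu)$ for \emph{every} $y$, and the lower estimate $X(y,x_\mu)\gtrsim\phi(y)$ at points where $\phi(y)$ is far from $1$ is never arranged, since such $y$ never enter $F$. Concretely, in $([0,1],d_L)$ with the product t-norm and $\phi=d_L(-,1/2)$, points $x$ with $\phi(x)=1$ and $d_L(y,x)=1$ for finitely many $y$ in the support can be taken arbitrarily small, and the resulting liminf ideal is strictly below $\phi$ at $y=1$. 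Second, even for the constraints you do impose, flatness applied to $\psi_i=X(y_i,-)$ only yields a point $x$ with $\phi(x)\,\&\,\min_i X(y_i,x)\geq\bigwedge_i\phi(y_i)-\delta$, i.e.\ everything bounded below by the \emph{minimum} of the $\phi(y_i)$; extracting simultaneously $\phi(x)\approx 1$ and $X(y_i,x)\gtrsim\phi(y_i)$ for each $i$ is precisely the hard step where \cite{SV2005,LLZ17} do real work (thresholded upper sets, recursive constructions), and your sketch assumes it. Third, your Cauchy verification uses flatness backwards: $\phi\otimes(\psi_1\wedge\psi_2)$ is not large because each $\psi_i$ equals $1$ at its own $x_{\lambda_i}$ (these are different points); rather each $\phi\otimes\psi_i$ is large and flatness transfers this to the meet, producing one common approximant $z$. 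And even granted such $z$, your net indexed by pairs $(\epsilon,F)$ is not forward Cauchy unless the choice at later indices is explicitly tied to the previously chosen elements, which your index set does not encode; this is why the cited proofs construct the net (in fact a sequence, using the countable cofinality of the value quantale) recursively.
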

\begin{thm}\label{fib1} {\rm(Flat ideals in $ ([0,1],d_L) $, basic cases)}
\begin{enumerate}\setlength{\itemsep}{-3pt}
\item[\rm(1)] If  $\&$ is a continuous   t-norm on $[0,1]$ that is   either isomorphic to the {\L}ukasiewicz t-norm or to the product t-norm, then for every flat ideal $ \phi $ of $ ([0,1],d_L) $, there exists some $x\in[0,1] $ such that $ \phi=d_L(-,x) $.
\item[\rm(2)] If $ \& $ is the t-norm $\min$, then a fuzzy lower set $ \phi$ of $ ([0,1],d_L) $ is a flat ideal if and only if $\phi(0)=1$.
\end{enumerate}																											\end{thm}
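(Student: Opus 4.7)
For part (1), I would invoke Theorem \ref{flat=forward Cauchy} to write any flat ideal $\phi$ as a forward Cauchy ideal $\phi = \bigvee_\lambda \bigwedge_{\mu\geq\lambda} d_L(-, x_\mu)$ arising from a forward Cauchy net $\{x_\lambda\}_{\lambda\in D}$. Since both flatness and the property ``$\phi = d_L(-,x)$ for some $x$'' are preserved by quantale isomorphisms, it suffices to treat the {\L}ukasiewicz and product t-norms directly. In each case I would set $x = \bigvee_\lambda \bigwedge_{\mu\geq\lambda} x_\mu$ (the liminf of the net along $D$) and verify $\phi(a) = d_L(a, x)$ pointwise by exchanging the sup-of-infs with $d_L(a,-)$: the family $\lambda \mapsto \bigwedge_{\mu\geq\lambda} x_\mu$ is monotone increasing in $D$, and in both basic cases $d_L(a,-)$ is order-preserving with enough continuity to justify the exchange.

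For part (2), by Propositions \ref{fl1}(3) and \ref{fu1}(3), every fuzzy lower set is decreasing and every fuzzy upper set is increasing. The forward direction is short: a flat ideal is inhabited, so some $\phi(a) = 1$, and since $\phi$ is decreasing this forces $\phi(0) = 1$. Conversely, $\phi(0) = 1$ immediately gives inhabitedness, so only flatness is at stake. The inequality $\phi \otimes (\psi_1 \wedge \psi_2) \leq (\phi \otimes \psi_1) \wedge (\phi \otimes \psi_2)$ is automatic; for the reverse, given $t$ strictly below both $\phi \otimes \psi_i$, pick witnesses $x_i$ with $\min(\phi(x_i), \psi_i(x_i)) > t$ and let $x = \max(x_1, x_2)$. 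Then $x$ equals some $x_i$, so $\phi(x)$ and $\psi_i(x)$ exceed $t$ directly; monotonicity of the other $\psi_j$ yields $\psi_j(x) \geq \psi_j(x_j) > t$; hence $x$ witnesses $\phi \otimes (\psi_1 \wedge \psi_2) > t$.

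The main obstacle is the pointwise computation in part (1). In the {\L}ukasiewicz case the exchange reduces to the identity $\sup_\mu \max(a - x_\mu, 0) = \max(a - \inf_\mu x_\mu, 0)$ together with a standard monotone-limit argument, after which $\phi(a) = d_L(a, x)$ falls out. In the product case the same argument works on $\{a > 0\}$, where $d_L(a, b) = \min(b/a, 1)$ is continuous in $b$; the singular point $a = 0$ must be handled separately but is trivial since $d_L(0, y) = 1$ for every $y$. Once this pointwise identity is established in the two basic cases, the isomorphism reduction delivers the general claim.
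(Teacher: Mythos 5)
Your proposal is correct and follows essentially the same route as the paper: part (1) is exactly the paper's argument (invoke Theorem \ref{flat=forward Cauchy} and take $x=\bigvee_{\lambda\in D}\bigwedge_{\mu\geq\lambda}x_\mu$, with you supplying the pointwise exchange the paper leaves implicit), and part (2) is the elementary monotonicity computation the paper dismisses as trivial (cf.\ Lemma \ref{compute1}). One tiny caution in (2): inhabitedness gives $\bigvee_a\phi(a)\geq 1$ but not necessarily an $a$ with $\phi(a)=1$; this is harmless since $\phi$ decreasing already yields $\phi(0)\geq\bigvee_a\phi(a)\geq 1$.
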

																												\begin{proof}																													(1) By Theorem \ref{flat=forward Cauchy}, \[ \phi(x)=\bv_{\lambda\in D}\bw_{\mu\geq \lambda}(x\ra x_{\mu}) \] for some forward Cauchy net $ \{x_i\}_{i\in I} $.
Then $ x=\bv_{\lambda\in D}\bw_{\mu\geq\lambda}x_{\mu} $ satisfies the requirement.

(2) Trivial.																													\end{proof}
																													

\begin{thm}\label{fidealinL} {\rm(Flat ideals in $ ([0,1],d_L) $, general case)}  Let $\&$ be a continuous t-norm on $[0,1]$. Then  a fuzzy lower set $ \phi $ of $ ([0,1],d_L) $ is a flat ideal if and only if it satisfies:													\begin{enumerate}\setlength{\itemsep}{-3pt}
\item[\rm(F1)]   $\phi(0)=1$.
\item[\rm(F2)] For each $c\in[0,1]$,  if $ \phi(c)\geq c^+ $  then $ \phi(c) $ is idempotent.
\item[\rm(F3)]   For each $c\in[0,1]$,  if $c$ is not idempotent and $\phi(c^-)> c^-$, then the correspondence $x\mapsto c^+\wedge\phi(x)$ defines  a flat ideal in the fuzzy ordered set $([c^-,c^+],d_L^c)$ valued in the quantale $([c^-,c^+],\&,c^+)$, where $d_L^c(x,y)=x\ra^cy=\min\{c^+,x\ra y\}$.
\end{enumerate}
\end{thm}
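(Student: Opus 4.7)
(F1) is immediate from (L1) and inhabitedness: $\phi(0)=\bigvee_x\phi(x)\geq 1$. For (F2) I argue by contradiction. If $\phi(c)\geq c^+$ and $d:=\phi(c)$ is not idempotent, then $d$ lies in the open interior $(d^-,d^+)$ of an Archimedean block. I plan to exhibit two fuzzy upper sets $\psi_1,\psi_2$ of $([0,1],d_L)$ --- concretely, principal upper sets generated by distinct points inside $(d^-,d^+)$, constructed via Proposition \ref{fu} --- and to use that on a block isomorphic to the \L{}ukasiewicz or product t-norm the multiplication strictly separates values, so that at a carefully chosen witness $y$ one has $d\&(\psi_1\wedge\psi_2)(y)$ strictly below $\bigl(d\&\psi_1(y)\bigr)\wedge\bigl(d\&\psi_2(y)\bigr)$, contradicting flatness of $\phi$.

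\textbf{Necessity of (F3).} Given $c$ non-idempotent with $\phi(c^-)>c^-$, the first point is that $c^-$ is itself idempotent, so applying (F2) (already established) at $c^-$ forces $\phi(c^-)$ to be idempotent; since $(c^-,c^+)$ contains no idempotent, $\phi(c^-)\geq c^+$, which yields inhabitedness of $\sigma:=c^+\wedge\phi(-)$ in the sub-quantale $([c^-,c^+],\&,c^+)$. That $\sigma$ is a fuzzy lower set of $([c^-,c^+],d_L^c)$ is exactly the content of (L3). For flatness of $\sigma$, given fuzzy upper sets $\psi_1',\psi_2'$ of $([c^-,c^+],d_L^c)$, I would extend each to an upper set $\tilde\psi_i$ of $([0,1],d_L)$ by setting $\tilde\psi_i=\psi_i'(c^-)$ below $c^-$, $\tilde\psi_i=\psi_i'$ on $[c^-,c^+]$, and $\tilde\psi_i=c^+$ above $c^+$; verify via Proposition \ref{fu} that $\tilde\psi_i$ is genuinely a fuzzy upper set of $([0,1],d_L)$; check that $\phi\otimes\tilde\psi_i=\sigma\otimes\psi_i'$ and similarly for the meet; then the flat identity for $\sigma$ descends from the flat identity for $\phi$.

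\textbf{Sufficiency.} Assume (F1)--(F3); inhabitedness of $\phi$ is (F1). For flatness, fix fuzzy upper sets $\psi_1,\psi_2$. The inequality $\phi\otimes(\psi_1\wedge\psi_2)\leq(\phi\otimes\psi_1)\wedge(\phi\otimes\psi_2)$ is formal; write $r$ for the right-hand side. The strategy is to use the ordinal sum decomposition of $\&$ to localize the suprema defining $\phi\otimes\psi_i$. By (F3) and Theorem \ref{fib1}(1), on each Archimedean block $[a_j,b_j]$ where $\phi$ is non-trivial (i.e.\ $\phi(a_j)>a_j$), the restriction $b_j\wedge\phi(-)$ is a principal flat ideal in the sub-quantale, so the sub-supremum $\bigvee_{x\in[a_j,b_j]}\phi(x)\&\psi_i(x)$ is attained at, or approximated from, a single point. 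Outside such blocks $\&$ acts as $\min$, where by Theorem \ref{fib1}(2) flatness is automatic. The main obstacle is stitching these local witnesses together when the suprema defining $\phi\otimes\psi_1$ and $\phi\otimes\psi_2$ are realized in different blocks; there I expect (F2) to be essential, forcing the two suprema to be idempotent and equal so that the interaction reduces to the $\min$-regime and a common witness can be extracted. Organizing the case analysis around the position of $r$ in the ordinal sum, and verifying that the local-principal plus global-$\min$ structure produces that common witness, is the technically delicate part of the argument.
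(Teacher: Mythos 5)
Your plan for the necessity of (F3) and for (F1) matches the paper's argument (extend upper sets from the block to $[0,1]$ and transport the flat identity), and that part is sound. But there are two genuine gaps.

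First, your argument for the necessity of (F2) cannot work as planned. If $\psi_1=d_L(p_1,-)$ and $\psi_2=d_L(p_2,-)$ are principal upper sets, then for \emph{any} fuzzy lower set $\phi$ one has $\phi\otimes d_L(p,-)=\bigvee_x\phi(x)\&(p\ra x)=\phi(p)$, and moreover $\psi_1\wedge\psi_2=d_L(p_1\vee p_2,-)$ is again principal, so
$\phi\otimes(\psi_1\wedge\psi_2)=\phi(p_1\vee p_2)=\phi(p_1)\wedge\phi(p_2)=(\phi\otimes\psi_1)\wedge(\phi\otimes\psi_2)$
holds automatically; no pair of principal upper sets can witness a failure of flatness. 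A related problem is that exhibiting a strict inequality at a single point $y$ says nothing about the suprema that define the tensor products. The paper instead tests flatness against the \emph{constant} upper set $\psi_1\equiv\phi(c)$ together with the principal one $\psi_2=c\ra\mathrm{id}$; both tensors equal $\phi(c)$, while $\phi\otimes(\psi_1\wedge\psi_2)\leq c^+\vee\bigl(\phi(c)\&\phi(c)\bigr)=\phi(c)\&\phi(c)$, which directly forces $\phi(c)$ to be idempotent. You need an upper set that is not principal for this step.

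Second, the sufficiency argument is only a sketch, and the part you defer (``stitching the local witnesses together'') is precisely the hard content. The paper's route is to prove that if $\phi$ satisfies (F2) and (F3) then for every fuzzy upper set $\psi$,
\[
\phi\otimes\psi=\bigvee_{x\in K_\phi}\phi(x)\wedge\psi(x),\qquad K_\phi=\{x\mid\phi(x)\geq x^+\},
\]
by showing that each term $\phi(c)\&\psi(c)$ with $c\notin K_\phi$ is dominated by some $\phi(b)\wedge\psi(b)$ with $b\in K_\phi$ (a three-case analysis using the principality of the block ideals from Theorem \ref{fib1}(1) and condition (U2) of Proposition \ref{fu}). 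Once the tensor is rewritten as a supremum of \emph{minima} of a decreasing and an increasing function over $K_\phi$, the flat identity follows from the elementary distributivity of Lemma \ref{compute1}. Your outline names the right ingredients, but without this reduction (or an equivalent one) the sufficiency is not established; in particular, ``outside the blocks flatness is automatic by Theorem \ref{fib1}(2)'' is not a usable step, since that theorem concerns the global $\min$ t-norm, not the complement of the blocks inside a general ordinal sum.
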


\begin{proof} The proof is divided into four lemmas given below.
The necessity part  is contained in  Lemma \ref{f1} and Lemma \ref{f2};   the sufficiency part follows  from Lemma \ref{compute1} and Lemma \ref{compute2}.	\end{proof}

\begin{lem}\label{f1} 	Let $ \phi $ be a flat ideal in $ ([0,1],d_L) $. Then for each $ c\in[0,1] $, if $ \phi(c)\geq c^+ $ then $ \phi(c) $ is idempotent.
\end{lem}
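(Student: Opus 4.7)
The plan is a proof by contradiction that exploits flatness against a carefully chosen pair of fuzzy upper sets. Suppose, to the contrary, that $d:=\phi(c)\geq c^+$ but $d$ is not idempotent. Then by Theorem~\ref{ordinal sum decomposition theorem}, $d$ lies in an open interval $(d^-,d^+)$ of consecutive idempotents on which $\&$ is isomorphic either to the {\L}ukasiewicz or to the product t-norm; in either case I can pick $p\in(d,d^+)$ with $d\&p<d$. Also, since $c^+$ is an idempotent with $c^+\leq d$, I have $c^+\leq d^-\leq p$.

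I will test flatness with the constant upper set $\psi_1\equiv p$ and the principal upper set $\psi_2(x)=c\ra x$. Inhabitedness of $\phi$ yields $\phi\otimes\psi_1=p$, and the fuzzy lower set inequality together with equality at $x=c$ yields $\phi\otimes\psi_2=\phi(c)=d$; hence $(\phi\otimes\psi_1)\wedge(\phi\otimes\psi_2)=d$. The heart of the argument is to show $\phi\otimes(\psi_1\wedge\psi_2)=d\&p$. For $x\geq c$, $(\psi_1\wedge\psi_2)(x)=p$ and monotonicity of $\phi$ gives $\phi(x)\&p\leq d\&p$, with equality at $x=c$. For $x<c$, the key bound is $c\ra x\leq c^+$: any $y>c^+$ would satisfy $c\&y=c>x$ by Proposition~\ref{idempotent}, and is thus excluded from the supremum defining $c\ra x$. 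This makes $(\psi_1\wedge\psi_2)(x)=c\ra x$, and since $c\ra x\leq c^+\leq\phi(x)$ with $c^+$ idempotent, a second application of Proposition~\ref{idempotent} gives $\phi(x)\&(c\ra x)=c\ra x\leq c^+\leq d\&p$. Combining the two regions, the supremum equals $d\&p$.

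Flatness then forces $d\&p=d$, contradicting the choice $d\&p<d$, so $\phi(c)$ must be idempotent. The main obstacle is the bookkeeping for the "$x<c$" regime of the tensor: one has to exploit the idempotent $c^+$ twice in succession---first to bound $c\ra x$, then to simplify $\phi(x)\&(c\ra x)$ via Proposition~\ref{idempotent}---so that the contribution from $x<c$ stays below $d\&p$ and the $x=c$ term alone determines the supremum.
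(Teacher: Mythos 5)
Your proof is correct and follows essentially the same route as the paper: both test flatness against the principal upper set $\psi_2=c\ra(-)$ paired with a constant upper set, and both rely on the bound $c\ra x\le c^+$ for $x<c$ to control the lower region of the tensor. The only cosmetic difference is that the paper takes the constant to be $\phi(c)$ itself and concludes $\phi(c)\le\phi(c)\&\phi(c)$ directly, whereas you take a constant $p$ slightly above $\phi(c)$ and argue by contradiction.
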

\begin{proof} Consider the fuzzy upper sets $ \psi_1,\psi_2 $  of $ ([0,1],d_L) $ given by $ \psi_1\equiv\phi(c) $ and $ \psi_2=c\ra\id $. 	Since $ \phi(0)=1 $, then $ \phi\otimes\psi_1=\phi(c) $. Since $ \phi $ is fuzzy lower set of $ ([0,1],d_L) $, it follows that \[\phi\otimes\psi_2=\bv_{x\in[0,1]}\phi(x)\&(c\ra x)=\phi(c).\] Hence $(\phi\otimes\psi_1)\wedge(\phi\otimes\psi_2)=\phi(c)$.
	
Since $\phi(c)\geq c^+ $, then \[ (\psi_1\wedge\psi_2)(x)=\begin{cases}
	c\ra x,&0\leq x<c,\\
	\phi(c),&c\leq x\leq 1,
	\end{cases} \]
	hence \begin{align*}
\phi(c)&=(\phi\otimes\psi_1)\wedge(\phi\otimes\psi_2)\\ &	= \phi\otimes(\psi_1\wedge\psi_2)\\ &= \Big(\bv_{x\in[0,c)}\phi(x)\&(c\ra x)\Big)\vee\Big(\bv_{x\in[c,1]}\phi(x)\&\phi(c)\Big)\\
	&\leq c^+\vee(\phi(c)\&\phi(c))\\
	&=\phi(c)\&\phi(c),&&(\phi(c)\geq c^+)
	\end{align*}
showing that $\phi(c)$ is idempotent.
\end{proof}

\begin{lem}\label{f2}	Let $ \phi $ be a flat ideal in $ ([0,1],d_L) $. For each $c\in[0,1] $, if $c$ is not idempotent and $\phi(c^-)> c^-$  then   $\sigma(x)= c^+\wedge\phi(x)$ is  a flat ideal in the fuzzy ordered set $([c^-,c^+],d_L^c)$ valued in   $([c^-,c^+],\&,c^+)$, where $d_L^c(x,y)=x\ra^cy=\min\{c^+,x\ra y\}$.
\end{lem}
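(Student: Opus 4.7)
The goal is to verify that $\sigma(x) = c^+ \wedge \phi(x)$ satisfies the three defining conditions of a flat ideal in the sub-quantale $\mathcal{Q}' = ([c^-, c^+], \&, c^+)$: namely, that $\sigma$ is a fuzzy lower set of $([c^-, c^+], d_L^c)$, that it is inhabited, and that it is flat. Write $\alpha = c^-$ and $\beta = c^+$ for brevity. The first condition is immediate from Proposition \ref{fl}(L3) applied to $\phi$, since our hypothesis $\phi(\alpha) > \alpha$ is stronger than the $\phi(\alpha) \geq \alpha$ required by (L3).

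For inhabitedness, I would show $\sigma(\alpha) = \beta$. Since $\alpha$ is idempotent, $\alpha^+ = \alpha$, so $\phi(\alpha) > \alpha = \alpha^+$ lets Lemma \ref{f1} force $\phi(\alpha)$ to be idempotent. The ordinal sum decomposition (Theorem \ref{ordinal sum decomposition theorem}) ensures the open interval $(\alpha, \beta) = (c^-, c^+)$ contains no idempotent of $\&$, so the smallest idempotent strictly above $\alpha$ is $\beta$. Hence $\phi(\alpha) \geq \beta$, giving $\sigma(\alpha) = \beta \wedge \phi(\alpha) = \beta$, the unit of $\mathcal{Q}'$; in particular $\bv_{x \in [\alpha, \beta]} \sigma(x) \geq \beta$.

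The main work is to show $\sigma$ is flat. The strategy is to pull the question back to $\phi$ by extending fuzzy upper sets of $\mathcal{Q}'$ to fuzzy upper sets on $[0,1]$. Given a fuzzy upper set $\tau$ of $([\alpha, \beta], d_L^c)$ valued in $\mathcal{Q}'$, I would define
\[
\tilde\tau(x) = \tau\bigl(\alpha \vee (x \wedge \beta)\bigr),
\]
so that $\tilde\tau$ agrees with $\tau$ on $[\alpha, \beta]$ and is constant equal to $\tau(\alpha)$ on $[0, \alpha]$ and to $\tau(\beta)$ on $[\beta, 1]$. A case analysis using Proposition \ref{idempotent} and the idempotence of $\alpha$ and $\beta$ shows $\tilde\tau$ is a fuzzy upper set of $([0,1], d_L)$, and clearly $\widetilde{\tau_1 \wedge \tau_2} = \tilde\tau_1 \wedge \tilde\tau_2$. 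Splitting the sup over $[0, \alpha], [\alpha, \beta], [\beta, 1]$ and using that $\phi(0) = 1$ (forced by inhabitedness and monotonicity of $\phi$), one computes
\[
\phi \otimes \tilde\tau = \tau(\alpha) \vee (\sigma \otimes \tau) \vee \bigl(\sigma(\beta) \& \tau(\beta)\bigr) = \sigma \otimes \tau,
\]
where the middle tensor is in $\mathcal{Q}'$; the identification $\phi(x) \& \tau(x) = \sigma(x) \& \tau(x)$ on $[\alpha, \beta]$ again uses Proposition \ref{idempotent} since $\tau(x) \leq \beta$, and the first and third pieces are absorbed into $\sigma \otimes \tau$ via $\sigma(\alpha) = \beta$ and idempotence of $\beta$. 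Applying flatness of $\phi$ to $\tilde\tau_1, \tilde\tau_2$ then yields
\[
\sigma \otimes (\tau_1 \wedge \tau_2) = \phi \otimes \widetilde{\tau_1 \wedge \tau_2} = (\phi \otimes \tilde\tau_1) \wedge (\phi \otimes \tilde\tau_2) = (\sigma \otimes \tau_1) \wedge (\sigma \otimes \tau_2),
\]
so $\sigma$ is flat.

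The main obstacle I expect is the case-analysis verification that $\tilde\tau$ is a fuzzy upper set of $([0,1], d_L)$, especially when $x > y$ straddle $\alpha$ or $\beta$. The key simplifications are the identities $\beta \ra y = y$ for $y < \beta$ and $\alpha \ra y = y$ for $y < \alpha$, both consequences of Proposition \ref{idempotent}; once these are in hand, each cross-interval case reduces either to the upper-set condition of $\tau$ in $\mathcal{Q}'$ or to a trivial bound such as $\tau(\alpha) \geq \alpha$.
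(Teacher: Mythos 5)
Your proof is correct and follows essentially the same route as the paper: establish inhabitedness via Lemma \ref{f1} (idempotence of $\phi(c^-)$ forces $\phi(c^-)\geq c^+$), extend each fuzzy upper set of $([c^-,c^+],d_L^c)$ to one on $([0,1],d_L)$, show $\phi\otimes\tilde\tau=\sigma\otimes\tau$, and transfer flatness of $\phi$. The only difference is cosmetic: the paper extends by $\overline\psi(x)=x$ on $[0,c^-)$ while you use the constant $\tau(c^-)$, and both extensions are easily checked (yours directly via Proposition \ref{idempotent}, the paper's via Proposition \ref{fu}).
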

\begin{proof}
Since $\phi(c^-)>c^-$, then $\phi(c^-)$ is idempotent by Lemma \ref{f1}. Thus, $\phi(c^-)\geq c^+$ and then $\sigma(c^-)=c^+$, showing that $\sigma$ is inhabited.
It remains to show that for any  fuzzy upper sets $\psi_1,\psi_2$ of the fuzzy ordered set $([c^-,c^+],d_L^c)$ (valued in  $([c^-,c^+],\&,c^+)$), \[\sigma\otimes(\psi_1\wedge\psi_2)=(\sigma\otimes\psi_1)\wedge(\sigma\otimes\psi_2).\]
	
\textbf{Step 1}.	For each fuzzy upper set $\psi$ in $([c^-,c^+],d_L^c)$, the map $ \overline{\psi}:[0,1]\lra[0,1] $, given by
	\[ \overline{\psi}(x)=\begin{cases}
	x,&x\in[0,c^-),\\
	\psi(x),&x\in[c^-,c^+],\\
	\psi(c^+),&x\in(c^+,1],
	\end{cases} \] is a fuzzy upper set of $ ([0,1],d_L)$.

This follows from Proposition \ref{fu} immediately.

\textbf{Step 2}. For each fuzzy upper set $\psi$ in $([c^-,c^+],d_L^c)$, $\sigma\otimes\psi=\phi\otimes\overline{\psi}.$	

We calculate:  \begin{align*} \phi\otimes\overline{\psi}& = \bv_{x\in [0,1]}\phi(x)\&\overline{\psi}(x) \\ &=\Big(\bv_{x\in[0,c^-)}\phi(x)\&x\Big)\vee \Big(\bigvee_{x\in[c^-,c^+]}\phi(x)\& \psi (x)\Big)\vee \Big(\bv_{x\in(c^+,1]}\phi(x)\& \psi (c^+)\Big)\\ & \leq c^-\vee\Big(\bigvee_{x\in[c^-,c^+]}(\phi(x)\wedge c^+)\&\psi(x)\Big) \vee\big(\phi(c^+)\&\psi(c^+)\big)\\ & ~~~~~~~~~ \big(\text{$\psi(x)\in[c^-,c^+]$ for all $x\in[c^-,c^+]$}\big)\\ & =\bigvee_{x\in[c^-,c^+]}\sigma(x)\&\psi(x)\\ & ~~~~~~~~~ \big(\text{$c^-\leq\phi(c^-)\& \psi(c^-)$, $(\phi(c^+)\wedge c^+)\&\psi(c^+)=\phi(c^+)\&\psi(c^+)$}\big) \\ &=\sigma\otimes \psi.  \end{align*}
 	
\textbf{Step 3}.	For any   fuzzy upper sets $\psi_1,\psi_2$ of $([c^-,c^+],d_L^c)$,
	\begin{align*}
	\sigma\otimes(\psi_1\wedge\psi_2)&=\phi\otimes(\overline{\psi_1\wedge\psi_2})\\ &=\phi\otimes(\overline{\psi_1}\wedge\overline{\psi_2})\\
	&=(\phi\otimes\overline{\psi_1})\wedge(\phi\otimes\overline{\psi_2})&(\phi\text{ is flat})\\
	&=(\sigma\otimes \psi_1)\wedge(\sigma\otimes\psi_2).
	\end{align*}

Therefore,	$ \sigma $ is a flat ideal in the fuzzy ordered set $([c^-,c^+],d_L^c)$ valued in  the quantale $([c^-,c^+],\&,c^+)$.
\end{proof}

The verification of the following lemma is straightforward and is left to the reader.

\begin{lem}\label{compute1}
Let $ K$ be a subset of $[0,1] $, $ \phi:K\lra[0,1] $ be a decreasing map, and $ \psi_1,\psi_2: K \lra [0,1] $ be increasing maps. Then \[ \bv_{x\in K}\phi(x)\wedge\psi_1(x)\wedge\psi_2(x)=\Big(\bv_{x\in K}\phi(x)\wedge\psi_1(x)\Big)\wedge\Big(\bv_{x\in K}\phi(x)\wedge\psi_2(x)\Big).\]
\end{lem}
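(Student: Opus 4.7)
The inequality $\leq$ is immediate, since for each $x\in K$ one has $\phi(x)\wedge\psi_1(x)\wedge\psi_2(x)\leq \phi(x)\wedge\psi_i(x)$ for $i=1,2$, and taking suprema gives the right-hand side as an upper bound.

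For the nontrivial direction $\geq$, the plan is to argue by approximation. Write $A=\bv_{x\in K}\phi(x)\wedge\psi_1(x)$ and $B=\bv_{x\in K}\phi(x)\wedge\psi_2(x)$. Given $\epsilon>0$, pick $x_1,x_2\in K$ with $\phi(x_1)\wedge\psi_1(x_1)>A-\epsilon$ and $\phi(x_2)\wedge\psi_2(x_2)>B-\epsilon$. The right-hand side is symmetric in $\psi_1,\psi_2$, so one may assume $x_1\leq x_2$. The main idea is then to evaluate the triple meet at the larger point $x_2$: since $\phi$ is decreasing and $\psi_1$ is increasing, $\phi(x_2)\leq\phi(x_1)$ while $\psi_1(x_2)\geq\psi_1(x_1)>A-\epsilon$, and from the choice of $x_2$ we have $\phi(x_2)>B-\epsilon$ and $\psi_2(x_2)>B-\epsilon$. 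Consequently
\[\phi(x_2)\wedge\psi_1(x_2)\wedge\psi_2(x_2)> (A\wedge B)-\epsilon.\]

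Thus the left-hand supremum exceeds $(A\wedge B)-\epsilon$ for every $\epsilon>0$, and letting $\epsilon\to 0$ yields the desired inequality. There is no genuine obstacle here: the only subtlety is the choice of which witness to evaluate at (the larger one), for which the opposite monotonicity of $\phi$ versus $\psi_1,\psi_2$ is exactly what makes the argument balance. The use of $\epsilon$-approximation avoids any need to assume the suprema are attained.
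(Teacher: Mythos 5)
Your proof is correct. The paper in fact gives no proof of this lemma (it is explicitly ``left to the reader''), and your argument --- take near-witnesses $x_1,x_2$ for the two suprema, use the $\psi_1\leftrightarrow\psi_2$ symmetry to assume $x_1\leq x_2$, and evaluate the triple meet at the larger point --- is exactly the straightforward verification intended, so there is nothing to compare against. Two small remarks: you should dispose of the trivial case $K=\emptyset$ (where both sides are $0$) before picking witnesses; and, curiously, your argument never actually uses that $\phi$ is decreasing --- the bound $\phi(x_2)>B-\epsilon$ comes from the choice of $x_2$, not from monotonicity --- so it proves the statement for arbitrary $\phi$ with $\psi_1,\psi_2$ increasing on a chain, and your closing remark about the ``opposite monotonicity'' being what makes the argument balance slightly overstates what is needed (this is harmless, since using fewer hypotheses is not an error).
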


	

\begin{lem}\label{compute2}
Suppose $\phi$ is a fuzzy lower set of $([0,1],d_L)$ that satisfies {\rm(F2)} and  {\rm(F3)} in Theorem \ref{fidealinL}. Let $K_{\phi}=\{x\in[0,1]\mid\phi(x)\geq x^+\}$. Then for each fuzzy upper set $\psi$ of $([0,1],d_L)$, \[\phi\otimes\psi=\bv_{x\in{K_{\phi}}}\phi(x)\wedge\psi(x).\]
\end{lem}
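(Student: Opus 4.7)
My plan is to establish the two inequalities separately. The direction $\bv_{y\in K_\phi}\phi(y)\wedge\psi(y)\leq\phi\otimes\psi$ is immediate: for $y\in K_\phi$ we have $\phi(y)\geq y^+$, so (F2) forces $\phi(y)$ to be idempotent, and Proposition \ref{idempotent} then gives $\phi(y)\&\psi(y)=\phi(y)\wedge\psi(y)$, so the supremum over $K_\phi$ is bounded by the supremum over $[0,1]$.

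For the reverse direction, I would fix $x\in[0,1]$ and construct an explicit witness $y\in K_\phi$ with $\phi(y)\wedge\psi(y)\geq\phi(x)\&\psi(x)$, via three cases based on where $\phi(x)$ sits relative to $[x^-,x^+]$. Case 1, $\phi(x)\geq x^+$, means $x\in K_\phi$ and $y=x$ works. In Case 2, $\phi(x)\leq x^-$, condition (L2) forces $\phi(x)=\phi(1)$, and monotonicity of $\psi$ reduces the task to bounding $\phi(1)\&\psi(1)$; when $\phi(1)$ is idempotent I would take $y=\phi(1)$ and invoke the fuzzy upper inequality $\phi(1)\&\psi(1)\leq\psi(\phi(1))$ (coming from $1\ra\phi(1)=\phi(1)$), and when $\phi(1)$ is not idempotent I would apply (F3) to the block $[\phi(1)^-,\phi(1)^+]$ (its hypothesis holds since $\phi(\phi(1)^-)\geq\phi(1)>\phi(1)^-$) and Theorem \ref{fib1} to express the restricted flat ideal as $\sigma(t)=\min\{\phi(1)^+,t\ra z\}$ for some $z$, and then take $y=z$.

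Case 3, $x^-<\phi(x)<x^+$, forces $x$ to be non-idempotent with $\phi(x^-)>x^-$, so (F3) applies to $[x^-,x^+]$ and Theorem \ref{fib1} yields $\sigma(t)=\min\{x^+,t\ra z\}$ for some $z\in[x^-,x^+]$, and in particular $\phi(x)=x\ra z$. When $\psi(x)>x^-$, I would take $y=z$, using $\phi(z)\geq x^+$ together with the fuzzy upper inequality $(x\ra z)\&\psi(x)\leq\psi(z)$ to conclude. When $\psi(x)\leq x^-$, the idempotent $x^-$ separates $\phi(x)$ from $\psi(x)$ so that $\phi(x)\&\psi(x)=\psi(x)$, and I would take $y=x^-\in K_\phi$ (valid since $\phi(x^-)>x^-$), using (U2) at $x^-$ to handle the two possibilities $\psi(x^-)\geq x^-$ and $\psi(x^-)=\psi(1)$. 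The main obstacle is exactly this last branching: no single witness works uniformly in Case 3, because $y=z$ fails when $\psi(x)\leq x^-$, since then $\psi(z)\leq\psi(x)$ may be strictly smaller than $\phi(x)\&\psi(x)$. Routine calculations throughout invoke Proposition \ref{idempotent} to convert $\&$ to $\wedge$ whenever an idempotent separates the operands.
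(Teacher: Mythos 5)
Your proposal is correct and follows essentially the same route as the paper: the easy inequality via (F2) and Proposition \ref{idempotent}, and the reverse one by producing for each $x$ a witness in $K_\phi$ using (L2), (F3) and the principal representation from Theorem \ref{fib1}; your trichotomy on the position of $\phi(x)$ in $[x^-,x^+]$ is just a reorganization of the paper's case split on the idempotency of $c$ and the value of $\phi(c^-)$. Two small remarks: in the non-idempotent branch of your Case 2 the one delicate verification, $\phi(1)\&\psi(1)\leq\psi(z)$, is left implicit but does go through because $z=\sigma(\phi(1)^+)=\phi(1)^+\wedge\phi(\phi(1)^+)\geq\phi(1)$, whence $\phi(1)\&\psi(1)\leq(1\ra z)\&\psi(1)\leq\psi(z)$; and in Case 3 your worry is unfounded, since $(x\ra z)\&\psi(x)=\phi(x)\&\psi(x)\leq\psi(z)$ always holds, so the witness $y=z$ works even when $\psi(x)\leq x^-$ (though your separate treatment via $y=x^-$ is also valid).
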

\begin{proof}
For all $x\in K_{\phi}$,   $\phi(x)$ is idempotent by (F2). Thus, $\phi(x)\&\psi(x)=\psi(x)\wedge\psi(x)$ whenever $x\in K_\phi$. Since \[\phi\otimes\psi=\bv_{x\in[0,1]}\phi(x)\&\psi(x),\] it suffices to show that for all $c\not\in K_{\phi}$, there is some $b\in K_{\phi}$ such that $\phi(c)\&\psi(c)\leq \phi(b)\wedge\psi(b)$. We proceed with three cases.
	
\textbf{Case 1}. $c$ is non-idempotent  and $\phi(c^-)>c^-$.
	
By (F3), $\sigma(x)=c^+\wedge\phi(x)$ is a flat ideal in the fuzzy ordered set $([c^-,c^+],d_L^c)$. 	Since the restriction of $\&$ on $[c^-,c^+]$ is either isomorphic to the \L ukasiewicz t-norm or to the product t-norm, by Theorem \ref{fib1} there is some $b\in[c^-,c^+] $ such that $\sigma(x)=d_L^c(x,b)$. We claim that this $b$ satisfies the requirement, that is, $\phi(c)\&\psi(c)\leq \phi(b)\wedge\psi(b)$.
	
If $c\leq b$, then	$\sigma(c)=d_L^c(c,b)=c^+$, hence $\phi(c)\geq c^+$, showing that   $c\in K_{\phi}$, contradictory to that $c\notin K_\phi$. Hence we have $b<c<c^+$ and $\phi(c)<c^+$. Consequently, $\phi(c)=\sigma(c) = c\ra b$. Then \[\phi(c)\&\psi(c)\leq(c\ra b)\leq c^+=\sigma(b)\leq\phi(b)\] and,  since $\psi$ is a fuzzy upper set, \[\phi(c)\&\psi(c)=(c\ra b)\&\psi(c)\leq\psi(b).\] Hence $\phi(c)\&\psi(c)\leq \phi(b)\wedge\psi(b)$, as desired.

\textbf{Case 2}. $c$ is non-idempotent   and $\phi(c^-)\leq c^-$.
We distinguish two subcases.

\emph{Subcase 1}. $\phi(c)$ is idempotent.
We show that $ b=\phi(c) $ satisfies the requirement.
Since $ \phi(c^-)\leq c^- $, then $b=\phi(c)\leq \phi(c^-)\leq c^-<c $, hence $ \phi(b)\geq\phi(c)=b =b^+$,  showing that $ b\in{K_{\phi}} $.
If $ \psi(b)\geq b$, then $\phi(b)\&\psi(b)\geq b\&b=b\geq\phi(c)\&\psi(c)$.
If $\psi(b)<b$, then $\psi(b)=\psi(c)$ by (U2) in Proposition \ref{fu}, hence $\phi(b)\&\psi(b)\geq\phi(c)\&\psi(c)$. So, $ b$ satisfies the requirement.

\emph{Subcase 2}. $\phi(c)$ is non-idempotent.

Since $ \phi(c^-)\leq c^- $, by (L2) in Proposition \ref{fl}  we have  $\phi(c^-) =\phi(1) $, hence $\phi(c)=\phi(1)$. Now we use this fact to show that $ \phi(\phi(c)^-)>\phi(c)^- $.
Since $ \phi(\phi(c)^+)\ra\phi(1)\geq 1\ra\phi(c)^+ =\phi(c)^+$, it follows that \[\phi(c)^+\&\phi(\phi(c)^+)=\phi(c)^+\wedge\phi(\phi(c)^+)\leq\phi(1),\] hence $\phi(\phi(c)^+)\leq\phi(1)$ because $\phi(c)^+>\phi(c)=\phi(1)$, so,   \[\phi(\phi(c)^+)=\phi(1) =\phi(c).\] Thus,  \[ \phi(\phi(c)^-)\geq\phi(\phi(c)^+)=\phi(c)>\phi(c)^-.\]

By (F3), $\sigma(x)=\phi(c)^+\wedge\phi(x)$ is a flat ideal in the fuzzy ordered set $([\phi(c)^-,\phi(c)^+],d_L^{\phi(c)})$ (valued in  $([\phi(c)^-,\phi(c)^+],\&,\phi(c)^+)$.
Then there is some $b\in[\phi(c)^-,\phi(c)^+] $ such that $\sigma(x)=d_L^{\phi(c)}(x,b)$. Since $ \phi(\phi(c)^+)=\phi(c) $, it follows that $\sigma(\phi(c)^+)=\phi(c)$, hence $ b<\phi(c)^+ $.
We claim that this $b$ satisfies the requirement. It suffices to show that $ b\in K_{\phi} $ and \[\phi(c)\&\psi(c)\leq\phi(\phi(c)^+)\&\psi(\phi(c)^+)\leq\phi(b)\wedge\psi(b).\]

Firstly, since $ \sigma(b)=\phi(c)^+ $, then $ \phi(b)\geq\phi(c)^+=b^+$, hence $b\in K_{\phi} $.

Secondly, we check the inequality $\phi(c)\&\psi(c)\leq\phi(\phi(c)^+)\&\psi(\phi(c)^+)$.  If $ \psi(\phi(c)^+)\geq \phi(c)^+$, then \[\phi(\phi(c)^+)\&\psi(\phi(c)^+)=\phi(c)\geq\phi(c)\&\psi(c).\]
If $\psi(\phi(c)^+)<\phi(c)^+$, then by (U2) in Proposition \ref{fu}  we have  $\psi(\phi(c)^+)=\psi(c)$, hence \[\phi(\phi(c)^+)\&\psi(\phi(c)^+)\geq\phi(c)\&\psi(c).\]

Thirdly, we check the inequality $\phi(\phi(c)^+)\&\psi(\phi(c)^+)\leq\phi(b)\wedge\psi(b)$. Since $\phi(\phi(c)^+)=\phi(c)$, it follows that $\phi(\phi(c)^+)=\sigma(\phi(c)^+)  =\phi(c)^+\ra b $, then   \[ \phi(\phi(c)^+)\&\psi(\phi(c)^+)=(\phi(c)^+\ra b)\&\psi(\phi(c)^+)\leq\psi(b). \]
Since
\[\phi(\phi(c)^+)\&\psi(\phi(c)^+)=\phi(c)\&\psi(\phi(c)^+)\leq\phi(c)\leq \phi(c)^+=\sigma(b)\leq \phi(b),   \] then the desired inequality  follows.

\textbf{Case 3}. $c$ is idempotent. We also distinguish two subcases.

\emph{Subcase 1}. $\phi(c)$ is idempotent.
We claim that $b=\phi(c)$ satisfies the requirement.
Since $c\notin K_\phi$, then $b=\phi(c)<c$, hence $ \phi(b)\geq\phi(c)=b $. This shows  $b\in K_{\phi}$.
If $ \psi(b)\geq b$, then $\phi(b)\&\psi(b)\geq b\&b=b\geq\phi(c)\&\psi(c)$.
If $\psi(b)<b$, then by (U2) in Proposition \ref{fu} we have that $\psi(b)=\psi(c)$, hence $\phi(b)\&\psi(b)\geq\phi(c)\&\psi(c)$.

\emph{Subcase 2}. $\phi(c)$ is non-idempotent.
Since $c\not\in K_{\phi}$, then $ \phi(c)<c $, hence $ \phi(c)=\phi(1) $ by (L2) in Proposition \ref{fl}.
The  rest of the proof then proceeds in the same way as that for \emph{Subcase 2} in \textbf{Case 2},  the details are omitted here. \end{proof}


\begin{thebibliography}{[99]}\setlength{\itemsep}{-3pt}

\bibitem{Rosenthal1990} Rosenthal K I. Quantales and Their Applications [M]. Harlow, Essex: Longman Scientific \& Technical,  1990. 
\bibitem{Zadeh71}Zadeh L A. Similarity relations and fuzzy orderings [J]. Information Sciences,   1971, 3: 177-200. 
	
\bibitem{Wagner97} Wagner K R.  Liminf convergence in $\Om$-categories [J]. Theoretical Computer Science, 1997,  184: 61-104. 

\bibitem{SV2005}Vickers S. Localic completion of generalized metric apaces [J]. Theory and Application of Categories, 2005, 14: 328-356. 
\bibitem{LZZ17}Lai H,   Zhang D,   Zhang G, A comparative study of ideals in fuzzy orders. Preprint, 2017, arXiv.org/abs/1704.05172 
	


	
\bibitem{Ha98} H\'{a}jek P.  Metamathematics of Fuzzy Logic [M]. Dordrecht: Kluwer Academic Publishers,  1998. 
\bibitem{KMP00}  Klement E P,   Mesiar R,  Pap E.  Triangular Norms [M].   Dordrecht: Kluwer Academic Publishers,  2000. 
	
\bibitem{Mostert1957} Mostert P S,  Shields A L. On the structure of semigroups on a compact manifold with boundary [J]. Annals of Mathematics, 1957,  65: 117-143. 
\bibitem{LZ06}Lai H,   Zhang D. Fuzzy preorder and fuzzy topology [J]. Fuzzy Sets and Systems, 2006, 157: 1865-1885. 
	
	

																																																												
\bibitem{LLZ17} Li W,   Lai H,   Zhang D. Yoneda completeness and flat completeness of ordered fuzzy sets [J]. Fuzzy Sets and Systems, 2017, 313: 1-24. 



	
	

																		\end{thebibliography}
\end{document}